\author[Rodrigues]{Eliana Rodrigues}
\address{Department of Academic Areas, Instituto Federal de Goi\'as, Formosa-GO 73813-816, Brazil}
\email{eliana.rodrigues@ifg.edu.br}
\author[de Melo]{Emerson de Melo}
\address{Department of Mathematics, University of Bras\'ilia, Bras\'ilia-DF 70910-900, Brazil}
\email{emerson@mat.unb.br}
\author[Ercan]{Gülin Ercan}
\address{Department of Mathematics, Middle East Technical University,
06800, Ankara/Turkey}
\email{ercan@metu.edu.tr}
\keywords{Frobenius groups, Frobenius-like groups, Dihedral groups, Automorphisms, Nilpotent residual}
\subjclass{20D45}
\title{NILPOTENT RESIDUAL\\ OF A FINITE GROUP }
\newtheorem{theo}{\sc Theorem}[section]
\newtheorem{lem}[theo]{\sc Lemma}
\newtheorem{proposition}[theo]{\sc Proposition}
\begin{document}

\begin{abstract}
Let $F$ be a nilpotent group
acted on by a group $H$ via automorphisms and let the group $G$ admit the
semidirect product $FH$ as a group of automorphisms so that $C_G(F) = 1$. We prove that  the order of $\gamma_\infty(G)$, the rank of $\gamma_\infty(G)$ are bounded in terms of  the orders of $\gamma_{\infty}(C_G(H))$ and $H$, the rank of $\gamma_{\infty}(C_G(H))$ and the order of $H$, respectively in cases where either  $FH$ is a Frobenius group; $FH$ is a Frobenius-like group satisfying some certain conditions;  or $FH=\langle \alpha,\beta\rangle$ is  a dihedral group generated by the involutions $\alpha$ and $\beta$ with  $F =\langle \alpha\beta\rangle$ and $H =\langle\alpha \rangle$. 

\end{abstract}

\maketitle

\section{Introduction}

Throughout all groups are finite. Let a group $A$ act by automorphisms on a group $G$. For any $a \in A$,  we denote by $C_G(a)$ the set $\{x\in G : x^a=x\},$ and write $C_G(A)=\bigcap_{a\in A}C_G(a).$ In this paper we focus on a certain question related to the strong influence of the structure of such fixed point subgroups on the structure of  $G$, and present some new results when the group $A$ is a Frobenius group or a Frobenius-like group or a dihedral group of automorphisms. 

In what follows we denote by $A^\#$ the set of all nontrivial elements of $A$, and we say that $A$ acts coprimely on $G$ if $(|A|,|G|)=1$. Recall that a Frobenius group $A=FH$ with kernel $F$ and complement $H$ can be characterized as a semidirect product of a normal subgroup $F$ by $H$ such that $C_F(h)=1$ for every $h \in H^\#$. Prompted by Mazurov's problem $17.72$ in the Kourokva Notebook \cite{KN}, some attention was given to the situation where a Frobenius group $A=FH$ acts by automorphisms on the group $G$. In the case where the kernel $F$ acts fixed-point-freely on $G$, some results on the structure of $G$ were obtained by Khukhro, Makarenko and Shumyatsky in a series of papers \cite{MS},  \cite{MKS}, \cite{K1}, \cite{K2}, \cite{K3}, \cite{KM1}, \cite{ENP}. They observed that various properties of $G$ are in a certain sense close to the corresponding properties of the fixed-point subgroup $C_G(H)$, possibly also depending on $H$.  In particular, when $FH$ is metacyclic they proved that if $C_G(H)$ is nilpotent of class $c$, then the nilpotency class of $G$ is bounded in terms of $c$ and $|H|$. In addition, they constructed examples showing that the result on the nilpotency class of $G$ is no longer true in the case of non-metacyclic Frobenius groups. However, recently in \cite{EJ2} it was proved that if $FH$ is supersolvable and $C_G(H)$ is nilpotent of class $c$, then the nilpotency class of $G$ is bounded in terms of $c$ and $|FH|$.

Later on, as a generalization of Frobenius group the concept of a Frobenius-like group was introduced by Ercan and Güloğlu in \cite{EG1}, and  their action  studied in a series of papers  \cite{EG2}, \cite{EGK1},\cite{EGK2},\cite{EGK3},\cite{EG4},\cite{EG5}.  A finite group $FH$ is said to be Frobenius-like if it has a nontrivial nilpotent normal subgroup $F$ with a
nontrivial complement $H$ such that $FH/F'$ is a Frobenius group with Frobenius kernel $F/F'$ and complement $H$ where $F'=[F,F]$. Several results about the properties of a
finite group $G$ admitting a Frobenius-like group of automorphisms $FH$ aiming at restrictions on $G$ in terms of $C_G(H)$ and focusing mainly on bounds for the Fitting height and related parameters as a generalization of earlier results obtained for
Frobenius groups of automorphisms; and also new theorems for Frobenius-like groups based on new representation-theoretic results. In these papers two special types of Frobenius-like groups have been handled. Namely, Frobenius-like groups $FH$ for which $F'$ is of prime order and is contained in $C_F(H)$; and  the Frobenius-like groups $FH$ for which $C_F(H)$ and $H$ are of prime orders, which we call Type I and  Type II, respectively throughout the remainder of this paper.

In \cite{PS} Shumyatsky showed that the techniques developed in \cite{ENP} can be 
used in the study of actions by groups that are not necessarily Frobenius. He considered a dihedral group $D=\langle \alpha, \beta \rangle$ generated by two involutions $\alpha$ and $\beta$ acting on a finite group $G$ in such a manner that $C_G(\alpha \beta)=1$. In particular, he proved that if $C_G(\alpha)$ and $C_G(\beta)$ are both nilpotent of class $c$, then $G$ is nilpotent and the nilpotency class of $G$ is bounded solely in terms of $c$. In \cite{EJ1},  a similar result was obtained for other groups. It should also be noted that in \cite{EG4} an extension of \cite{PS} about the nilpotent length obtained by proving that the nilpotent length of a group $G$ admitting  a dihedral group of automorphisms in the same manner is equal to the maximum of the nilpotent lengths of the subgroups
$C_G(\alpha)$ and $C_G(\beta)$. 

Throughout  we shall use the expression ``$(a,b,\dots )$-bounded'' to abbreviate ``bounded from above in terms of  $a, b,\dots$ only''.  Recall that the rank $\mathbf r(G)$ of a finite group $G$ is the minimal number  $r$  such that every subgroup of $G$ can be generated by at most $r$ elements. Let $\gamma_\infty(G)$ denote the \textit{nilpotent residual} of the group $G$, that is the intersection of all normal subgroups of $G$ whose quotients are nilpotent.  Recently, in \cite{EAP}, de Melo, Lima and Shumyatsky considered the case where $A$ is a finite group of prime exponent $q$ and of order at least $q^3$ acting on a finite $q'$-group $G$.  Assuming that $|\gamma_\infty(C_G(a))| \leq m$ for any $a \in A^\#$,  they showed that $\gamma_\infty(G)$ has $(m,q)$-bounded order. In addition, assuming that the rank of  $\gamma_\infty(C_G(a))$ is at most $r$ for any $a \in A^\#$, they proved that the rank of  $\gamma_\infty(G)$ is $(m,q)$-bounded. Later, in \cite{E}, it was proved that the order of $\gamma_\infty(G)$ can be bounded by a number independent of the order of $A$. 

The purpose of the present article is to study the residual nilpotent of finite groups admitting a Frobenius group, or a Frobenius-like group of Type I and Type II, or a dihedral group as a group of automorphisms. Namely we obtain the following results.

\textbf{Theorem A} 
Let $FH$ be a Frobenius, or a Frobenius-like group of Type I or Type II,  with kernel $F$ and complement $H$. 
Suppose that $FH$ acts on a finite group $G$ in such a way that $C_G(F)=1$. Then
\begin{itemize}
\item[a)]  $|\gamma_\infty(G)|$ is bounded solely in terms of $|H|$ and $|\gamma_{\infty}(C_G(H))|$;
\item[b)] the rank of $\gamma_\infty(G)$ is bounded in terms of $|H|$ and the rank of $\gamma_{\infty}(C_G(H))$.
\end{itemize}

\textbf{Theorem B} 
Let $D= \langle\alpha, \beta \rangle$ be a dihedral group generated by two involutions $\alpha$ and $\beta$. Suppose that $D$ acts on a finite group $G$ in such a manner that $C_G(\alpha\beta)=1$. Then 

\begin{itemize}
\item[a)] $|\gamma_\infty(G)|$ is bounded solely in terms of $|\gamma_{\infty}(C_G(\alpha))|$ and $|\gamma_\infty(C_G(\beta))|$;
\item[b)] the rank of $\gamma_\infty(G)$ is bounded in terms of the rank of $\gamma_{\infty}(C_G(\alpha))$ and $\gamma_\infty(C_G(\beta))$. 
\end{itemize}

The paper is organized as follows. In Section 2 we list some results to which we appeal frequently. Section 3 is devoted to the proofs of  two key propositions which play crucial role in proving Theorem A and Theorem B whose proofs  are given in Section 4.

\section{Preliminaries}

If $A$ is a group of automorphisms of $G$, we use $[G,A]$ to denote the subgroup generated by elements of the form $g^{-1}g^a$, with $g \in G$ and $a \in A$. Firstly, we recall some well-known facts about coprime
action, see for example \cite{GO}, which will be used  without any further references.

\begin{lem} \label{lema 1}
Let $Q$ be a group of automorphisms of a finite group $G$ such that $(|G|,|Q|) = 1$. Then
\begin{itemize}
\item[(a)] $G= C_G(Q)[G,Q]$. 
\item[(b)] $Q$ leaves some Sylow $p$-subgroup of $G$ invariant for each prime $p \in \pi(G)$.
\item[(c)] $C_{G /N}(Q) =  C_G(Q) N /N$ for any $Q$-invariant normal subgroup $N$ of $G$.
\end{itemize}
\end{lem}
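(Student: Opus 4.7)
The plan is to derive all three parts as standard consequences of the Schur--Zassenhaus theorem and the coprime vanishing of first cohomology, working inside the semidirect product $\Gamma = G\rtimes Q$.

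For (a), I would set $N=[G,Q]$ and note that $Q$ acts trivially on $G/N$, so the task reduces to lifting every coset of $N$ in $G$ to a $Q$-fixed element. Inducting on $|N|$ via a $Q$-invariant chief series, I reduce to the case where $N$ is an elementary abelian $p$-group for some $p \mid |G|$; here the lifting obstruction lives in $H^1(Q,N)$, which vanishes by the classical averaging argument since $(|Q|,|G|)=1$ makes $|Q|$ invertible in $N$. Summing the lifts across composition factors gives $G = C_G(Q)\,N = C_G(Q)[G,Q]$.

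For (b), I would pick any Sylow $p$-subgroup $P$ of $\Gamma$. Since $p \nmid |Q|$, $P$ lies in $G$ and is a Sylow $p$-subgroup of $G$. A Frattini argument gives $\Gamma = G \cdot N_\Gamma(P)$, so $N_\Gamma(P)/N_G(P) \cong Q$; because $(|N_G(P)|,|Q|)=1$, Schur--Zassenhaus produces a complement $K$ of $N_G(P)$ in $N_\Gamma(P)$ with $|K|=|Q|$, and $K$ is then a complement to $G$ in $\Gamma$, hence conjugate to $Q$ inside $\Gamma$ by some $g \in G$. The Sylow $p$-subgroup $P^{g^{-1}}$ of $G$ is thus normalized by $Q$.

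For (c), one inclusion is trivial. For the other, letting $H$ be the full preimage of $C_{G/N}(Q)$ in $G$, the subgroup $H$ is $Q$-invariant with $[H,Q] \le N$, so applying (a) to the coprime action of $Q$ on $H$ yields $H = C_H(Q)[H,Q] \le C_G(Q) \cdot N$, giving $C_{G/N}(Q) = H/N \le C_G(Q)N/N$ as required. The main technical input throughout is the $H^1$-vanishing used in (a); once this is granted, (b) is a Frattini-plus-Schur--Zassenhaus formality and (c) reduces to a one-line application of (a), so I expect (a) to be the only nontrivial step.
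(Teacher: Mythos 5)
The paper offers no proof of this lemma: it is stated as a package of well-known coprime-action facts with a citation to Gorenstein's book, so there is nothing in-paper to compare against line by line. Your argument is the standard textbook proof and is correct in substance: the coboundary/$H^1(Q,N)=0$ computation for lifting a $Q$-invariant coset through an elementary abelian layer, the Frattini-plus-Schur--Zassenhaus argument for (b), and the derivation of (c) by applying (a) to the preimage of $C_{G/N}(Q)$ are exactly how these statements are proved in the references. One step should be made explicit. In (a), passing to a $Q$-invariant chief series of $N=[G,Q]$ with \emph{elementary abelian} factors silently assumes $N$ is solvable, and in (b) the conjugacy of complements in Schur--Zassenhaus requires one of the two factors to be solvable; neither follows from the stated hypotheses alone. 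In full generality both are rescued by the Feit--Thompson theorem (since $(|G|,|Q|)=1$, one of $G$, $Q$ has odd order, hence is solvable), and when it is $Q$ rather than $G$ that is solvable, the fixed-point lifting in (a) and (c) is usually carried out via Glauberman's lemma instead of the chief-series-plus-$H^1$ route. This is the standard caveat attached to these facts; your proof should acknowledge it, but it does not affect the applications in the paper.
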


We list below some facts about  the action of Frobenius and Frobenius-like groups. Throughout, a non-Frobenius Frobenius-like group is always considered under the hypothesis below.

\textbf{Hypothesis*} Let $FH$ be a non-Frobenius Frobenius-like group with kernel $F$ and complement $H$. Assume that  a Sylow $2$-subgroup of $H$ is cyclic and normal, and $F$ has no extraspecial sections of order $p^{2m+1}$ such that $p^m + 1 = |H_1|$ for some subgroup $H_1 \leq H$. 
 
It should be noted that Hypothesis* is automatically satisfied if either $|FH|$ is odd or $|H| = 2$.

\begin{theo}\label{theFrob}
Suppose that a finite group $G$ admits a Frobenius group or a Frobenius-like group of automorphisms $FH$ with kernel F and complement H such that $C_G(F)=1$. Then $C_G(H)\ne 1$ and $\mathbf r(G)$ is bounded in terms of $\mathbf r(C_G(H))$ and $|H|.$
\end{theo}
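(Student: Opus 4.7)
My plan is first to establish $C_G(H) \ne 1$, then to reduce the rank bound to the case where $G$ is a $p$-group, and finally to appeal to representation-theoretic bounds on $FH$-modules.

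For the nontriviality of $C_G(H)$ in the Frobenius case, I would invoke the result of Khukhro that a Frobenius group $FH$ acting on a finite group $G$ with $C_G(F) = 1$ forces $C_G(H) \ne 1$; the proof reduces to $FH$-invariant Sylow subgroups and exploits that $F$, being the Frobenius kernel, is nilpotent together with $C_P(F) \le C_G(F) = 1$ for any $FH$-invariant Sylow $p$-subgroup $P$. For the non-Frobenius Frobenius-like case under Hypothesis*, the analogous conclusion is due to Ercan and G\"ulo\u{g}lu in \cite{EG1}: one first applies the Frobenius conclusion to the action of $FH/F'$ on $G$, and then uses Hypothesis* (which forbids certain extraspecial sections of $F$) to lift fixed points back to $G$.

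For the rank bound, I would reduce to the case where $G$ is a $p$-group. Since $G$ is solvable (a consequence of $C_G(F) = 1$ with $F$ nilpotent, via Belyaev--Hartley-type arguments), its rank is controlled by the maximum rank of its Sylow subgroups up to a bounded additive term. By Lemma \ref{lema 1}(b), $FH$ leaves invariant some Sylow $p$-subgroup $P$ of $G$ for each $p \in \pi(G)$; each such $P$ inherits the hypothesis with $C_P(F) = 1$ and $\mathbf r(C_P(H)) \le \mathbf r(C_G(H))$, so we may assume $G = P$. Inside $P$, I would take a chief $FH$-series with elementary abelian factors $V$ over $\mathbb F_p$ and bound the number of generators of each factor by a function of $|H|$ and $\dim C_V(H)$, using the module-theoretic results of \cite{MKS} in the Frobenius case and \cite{EG2}, \cite{EGK2} in the Frobenius-like case; summing over the finite chain controlled by $\mathbf r(C_G(H))$ yields the bound on $\mathbf r(P)$.

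The main obstacle is the representation-theoretic step inside a $p$-group: bounding $\dim V$ in terms of $\dim C_V(H)$ and $|H|$ alone. In the Frobenius-like case Hypothesis* is used precisely to exclude the extraspecial modules that would otherwise produce counterexamples, and the required bound is obtained through a Clifford-theoretic analysis of $FH$-modules of considerable depth. Reproducing this argument in full is nontrivial, which is why in the present work Theorem \ref{theFrob} is imported as a known tool rather than re-proved from scratch.
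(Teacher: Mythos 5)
The paper offers no proof of this statement at all: Theorem~\ref{theFrob} sits in the Preliminaries as an imported result, the Frobenius case being Khukhro's rank theorem \cite{K3} and the Frobenius-like case being Ercan--G\"ulo\u{g}lu--Khukhro \cite{EGK1}, so your decision to treat it as a known tool rather than re-prove it is exactly what the authors do. Your sketch of the external arguments is broadly faithful, with one caveat worth noting if you ever tried to fill it in: bounding each chief factor and ``summing over the finite chain'' does not bound $\mathbf r(P)$ for a $p$-group $P$, since the length of a chief series is not controlled by $\mathbf r(C_G(H))$ and $|H|$ (a cyclic $p$-group already has rank $1$ but arbitrarily long series), which is precisely why the proofs in \cite{K3} and \cite{EGK1} pass through more delicate machinery (powerful $p$-groups and bounds on $\mathbf r([P,f])$) rather than a naive series summation.
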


\begin{proposition}\label{Fr}
Let $FH$ be a Frobenius, or a Frobenius-like group of Type I or Type II. Suppose that $FH$ acts on a $q$-group $Q$ for some prime $q$ coprime
to the order of $H$ in case $FH$ is not Frobenius. Let $V$ be a $kQFH$-module where $k$ is a field with characteristic not dividing $|QH|.$ Suppose further that $F$ acts fixed-point freely on the semidirect
product $VQ$. Then we have $C_V(H)\ne 0$ and $$Ker(C_Q(H) \ \textrm{on} \ C_V(H)) = Ker(C_Q(H) \ \textrm{on} \ V).$$ \end{proposition}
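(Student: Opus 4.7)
My plan is to establish the two assertions sequentially.

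For the non-vanishing $C_V(H)\ne 0$: since $F$ acts fixed-point-freely on $VQ$, restricting to $V$ it acts fixed-point-freely on $V$. Because $\mathrm{char}(k)\nmid |H|$, Maschke gives $V=C_V(H)\oplus [V,H]$ as $kH$-modules. The non-vanishing is then a known consequence of the Frobenius/Frobenius-like hypothesis on $FH$: in the Frobenius case via the classical Brauer character argument showing that $V\!\downarrow\!F$ is a free-type $kF$-module (see \cite{MS},\cite{MKS},\cite{K1},\cite{K2},\cite{K3},\cite{KM1},\cite{ENP}), and in the Frobenius-like Type I/II cases via the Ercan--G\"ulo\u{g}lu representation-theoretic results (see \cite{EG1},\cite{EG2},\cite{EG4},\cite{EG5},\cite{EGK1},\cite{EGK2},\cite{EGK3}). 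Hypothesis* is automatic in Types I and II thanks to the prescribed small structure of $F'$ (which is of prime order and contained in $C_F(H)$) or of $|C_F(H)|$ and $|H|$, so the cited non-vanishing theorems apply.

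For the kernel equality, let $x\in C_Q(H)$ with $[C_V(H),x]=0$; the goal is $[V,x]=0$. As $[x,H]=1$ and $\mathrm{char}(k)$ is coprime to $|H|\cdot|\langle x\rangle|$ (part of $\mathrm{char}(k)\nmid |QH|$), both Maschke decompositions $V=C_V(H)\oplus[V,H]$ and $V=C_V(x)\oplus[V,x]$ are invariant under $\langle x,H\rangle$, and they refine to a common direct sum of four pieces indexed by the joint $(H,\langle x\rangle)$-eigenstructure. The hypothesis says precisely that the $\bigl(C_V(H)\cap[V,x]\bigr)$-summand vanishes, i.e., $C_V(H)\subseteq C_V(x)$; equivalently, $[V,x]\subseteq [V,H]$.

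Now assume $[V,x]\ne 0$ for contradiction. By passing to a $kQFH$-composition factor on which $x$ still acts non-trivially (the hypotheses that $F$ is fpf on $VQ$ and that $x$ centralizes $C_V(H)$ both descend to submodules and quotients), reduce to the case that $V$ is simple as a $kQFH$-module. Clifford theory then makes $V\!\downarrow\!Q$ isotypic and gives tight control over the Wedderburn structure of $V$ over $kQ$. By the first part, $C_V(H)\ne 0$, so $[V,H]$ is a proper $kH$-submodule. The final contradiction is to be produced by examining the $kQFH$-submodule generated by $[V,x]\subseteq [V,H]$: the $F$-conjugates of $[V,x]$ inside $V$, combined with the way $H$ permutes these $F$-conjugates through the relation $(Y^f)^h=Y^{h^{-1}fh}$ for $H$-invariant $Y$, should force a nonzero $H$-fixed vector to appear inside the orbit sum and hence inside $[V,x]$, contradicting $[V,x]\cap C_V(H)=0$. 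This last argument, which has to transfer information across the non-commuting actions of $Q$, $F$ and $H$ via the Frobenius/Frobenius-like structure and the fpf-on-$VQ$ hypothesis, is the technical heart of the proof and the step I expect to be the main obstacle; it closely parallels the Clifford-type representation-theoretic machinery developed in the Ercan--G\"ulo\u{g}lu series for analogous kernel-rigidity statements.
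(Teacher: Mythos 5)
The paper does not prove this proposition at all: it is imported verbatim from the literature (Proposition~2.2 of \cite{EGO} for the Frobenius case, Proposition~C of \cite{EG2} for Type I, Proposition~2.1 of \cite{EG3} for Type II), with the only added remark being that the Frobenius-case proof survives replacing the coprimeness condition $(|Q|,|F|)=1$ by $C_{VQ}(F)=1$. So the expected ``proof'' here is a citation, and your first paragraph (for $C_V(H)\ne 0$) is in that spirit, though your references are scattered; the precise sources are the three propositions above. One inaccuracy: Hypothesis* is not ``automatic'' for Types I and II --- in this paper it is a standing assumption on every non-Frobenius Frobenius-like group, and it is only stated to hold automatically when $|FH|$ is odd or $|H|=2$.

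Your attempt to prove the kernel equality directly contains a genuine gap, and you identify it yourself. The preliminary reductions are sound: from $[C_V(H),x]=0$ and the two Maschke decompositions you correctly get $C_V(H)\subseteq C_V(x)$, hence $[V,x]\cap C_V(H)=0$ and $[V,x]\subseteq [V,H]$. But the concluding step --- that the $F$-conjugates of $[V,x]$, permuted by $H$, ``should force'' a nonzero $H$-fixed vector inside $[V,x]$ --- is precisely the content of the cited propositions and is asserted rather than proved. Nothing in your sketch actually uses the fixed-point-freeness of $F$ on the semidirect product $VQ$ (as opposed to on $V$ alone), nor the specific Frobenius or Frobenius-like structure of $FH$, and without these the claim is false: the statement genuinely concerns the interaction of $C_Q(H)$ with the $Q$-homogeneous components of $V$, not merely the $\langle x,H\rangle$-module structure of $V$. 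As written, the argument does not close; the correct course is to quote the three propositions as the paper does (checking, for the Frobenius case, that the proof in \cite{EGO} only uses $C_Q(F)=1$ rather than coprimeness).
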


\begin{proof} See \cite{EGO} Proposition 2.2 when $FH$ is Frobenius; \cite{EG2} Proposition C when $FH$ is Frobenius-like of Type I; and \cite{EG3} Proposition 2.1 when $FH$ is Frobenius-like of Type II. It can be easily checked that \cite{
EGO} Proposition 2.2 is  valid when $C_Q(F)=1$ without the coprimeness condition $(|Q|,|F|)=1.$
\end{proof}

The proof of  the following theorem can be found in \cite{PS} and in \cite{E1}.

\begin{theo}\label{theDih}
Let $D= \langle\alpha, \beta \rangle$ be a dihedral group generated by two involutions $\alpha$ and $\beta$. Suppose that $D$ acts on a finite group $G$ in such a manner that $C_G(\alpha\beta)=1$. Then
\begin{itemize}
\item[(a)] $G = C_G(\alpha)C_G(\beta)$;

\item[(b)]  the rank of $G$ is bounded in terms of the rank of $C_G(\alpha)$ and $C_G(\beta)$;

\end{itemize}
\end{theo}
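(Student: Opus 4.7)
The whole argument pivots on the observation that the hypothesis $C_G(\alpha\beta)=1$ means the cyclic subgroup $F=\langle\alpha\beta\rangle$ acts fixed-point-freely on $G$.

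For part (a), I would first note that the map $\tau\colon G\to G$, $\tau(g)=g^{-1}g^{\alpha\beta}$, is injective: if $\tau(g_1)=\tau(g_2)$ then $g_2g_1^{-1}\in C_G(\alpha\beta)=\{1\}$. Since $G$ is finite, $\tau$ is a bijection, so every $g\in G$ can be written as $g=x^{-1}x^{\alpha\beta}$ for some $x$. Inserting $x^\alpha(x^\alpha)^{-1}$ between the two factors yields
\[
g=(x^{-1}x^\alpha)\bigl((x^\alpha)^{-1}(x^\alpha)^\beta\bigr)=uv,
\]
where a short computation gives $u^\alpha=u^{-1}$ and $v^\beta=v^{-1}$. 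Hence $G=I_\alpha I_\beta$, with $I_\gamma=\{g\in G\colon g^\gamma=g^{-1}\}$.

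To upgrade this to the claimed factorization $G=C_G(\alpha)C_G(\beta)$, I would induct on $|G|$. Choose a minimal $D$-invariant normal subgroup $N$ of $G$. The fixed-point-free action of $\alpha\beta$ on $N$ forces strong structural restrictions: in the simplest (elementary abelian) case an eigenspace analysis using the relation $\alpha(\alpha\beta)\alpha^{-1}=(\alpha\beta)^{-1}$ produces $N=C_N(\alpha)C_N(\beta)$, and the nonabelian minimal case is handled by Thompson-type arguments that exclude fixed-point-free automorphisms from nonabelian simple groups. The induction hypothesis gives $G/N=C_{G/N}(\alpha)C_{G/N}(\beta)$, and Lemma \ref{lema 1}(c) then lets one lift both factorizations back to $G$. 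The main obstacle is the non-coprime case $p\mid|D|$, where the standard coprime-action tools are unavailable; this is the delicate technical heart of the arguments in \cite{PS} and \cite{E1}.

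For part (b), given the factorization $G=C_G(\alpha)C_G(\beta)$ supplied by (a), I would invoke a general rank inequality for products of subgroups: if $G=AB$ then $\mathbf{r}(G)$ is bounded in terms of $\mathbf{r}(A)+\mathbf{r}(B)$, a fact that can be localized Sylow by Sylow using the $D$-invariant Sylow subgroups provided by Lemma \ref{lema 1}(b). Applied to $A=C_G(\alpha)$ and $B=C_G(\beta)$ this immediately yields the required bound on $\mathbf{r}(G)$.
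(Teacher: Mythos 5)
This theorem is not proved in the paper at all: it is quoted from \cite{PS} and \cite{E1}, so there is no internal proof to compare against. Judged on its own terms, your proposal for part (a) starts correctly --- the bijectivity of $g\mapsto g^{-1}g^{\alpha\beta}$ and the resulting factorization $G=I_\alpha I_\beta$ into $\alpha$-inverted and $\beta$-inverted elements is indeed the standard opening move --- but everything after that is a sketch of intent rather than a proof. Two concrete gaps: (i) solvability of $G$ (needed to have elementary abelian minimal $D$-invariant normal subgroups) does not follow from ``Thompson-type arguments,'' since $\alpha\beta$ may have composite order; one needs Rowley's CFSG-dependent theorem on fixed-point-free automorphisms. (ii) The lift from $G/N$ to $G$ uses $C_{G/N}(\alpha)=C_G(\alpha)N/N$, which is exactly Lemma \ref{lema 1}(c) and is unavailable when $2$ divides $|N|$; likewise the ``eigenspace analysis'' of $N=C_N(\alpha)C_N(\beta)$ breaks down over $\mathbb{F}_2$. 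You flag this as ``the delicate technical heart,'' which is honest, but flagging the hard case and deferring it to the very references being proved is not a proof.

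Part (b) contains a more serious gap: the ``general rank inequality for products of subgroups'' --- that $G=AB$ forces $\mathbf r(G)$ to be bounded in terms of $\mathbf r(A)$ and $\mathbf r(B)$ --- is not a known general fact and cannot simply be invoked. (A subgroup of $AB$ need not factorize through subgroups of $A$ and $B$, and even after passing to Sylow subgroups via the Wielandt--Kegel factorization $G_p=A_pB_p$, no elementary bound on the rank of a product of two $p$-groups of given rank is available.) Indeed, if such an inequality were available, the rank statement in Theorem \ref{theFrob} would also be nearly immediate from $G=\langle C_G(H)^F\rangle$, whereas Khukhro's proof in \cite{K3} requires genuine work. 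The actual argument for (b) in \cite{E1} proceeds, as in the Frobenius case, by reducing to the action on elementary abelian $D$-invariant sections and applying Clifford-theoretic fixed-point counting --- precisely the kind of analysis carried out for $[V,Q]$ in Propositions \ref{prop1} and \ref{prop2} of this paper. So part (b) as written does not go through.
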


\begin{proposition} \label{dih}Let $D =\langle \alpha,\beta\rangle$ be a dihedral group generated by the involutions $\alpha$ and $\beta.$ Suppose that $D$ acts on a $q$-group $Q$ for some prime $q$ and let V be a $kQD$-module for a field $k$ of characteristic different from
$q$ such that the group $F =\langle \alpha\beta\rangle$ acts fixed point freely on the semidirect
product $VQ$. If $C_Q(\alpha)$ acts nontrivially on $V$ then we have $C_V (\alpha)\ne 0$
and $Ker(C_Q(\alpha) \ \textrm{on} \ C_V(\alpha)) = Ker(C_Q(\alpha) \ \textrm{on} \ V)$. 
\end{proposition}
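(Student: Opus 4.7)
The plan is to adapt the proof of Proposition~\ref{Fr} (the Frobenius analogue) to the dihedral group $D=F\rtimes\langle\alpha\rangle$, with the involution $\alpha$ playing the role of the Frobenius complement. Pass to the algebraic closure $\bar k$ of $k$ and, using that $V$ is semisimple as a $\bar kQ$-module together with Clifford analysis of the $D$-action on the $Q$-homogeneous components, reduce to the case where $V$ is an irreducible $\bar kQD$-module and $C_Q(\alpha)$ acts nontrivially on $V$: on summands where $C_Q(\alpha)$ acts trivially both kernels in the statement are all of $C_Q(\alpha)$, and both conclusions respect direct sums.

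For the nonvanishing $C_V(\alpha)\neq 0$: in characteristic $2$ this is automatic, since $\alpha-1$ is nilpotent on $V$ and cannot be injective on a nonzero space. In characteristic $\neq 2$, suppose for contradiction $C_V(\alpha)=0$; then $\alpha$ acts as $-1$ on $V$, so does the conjugate $\beta\alpha\beta$, and consequently $(\alpha\beta)^2=\alpha\cdot\beta\alpha\beta$ acts trivially on $V$. The fixed-point-free action of $F=\langle\alpha\beta\rangle$ on $V$ then forces $|\alpha\beta|\leq 2$; excluding the degenerate case $|F|=1$ leaves $D$ a Klein four-group with $\alpha\beta$ acting as $-1$ and $\beta$ acting trivially on $V$. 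An elementary argument on $Z(Q)$ rules out $q=2$ (no nontrivial $2$-group admits an involutive fixed-point-free automorphism), so $q$ is odd. The fpf involutive action of $\alpha\beta$ on the $q$-group $Q$ then forces $Q$ abelian and inverted by $\alpha\beta$, so for $x\in C_Q(\alpha)$ we get $x^\beta=x^{\alpha\beta}=x^{-1}$. Since $\beta$ acts trivially on $V$, the representation $\rho\colon Q\to\mathrm{GL}(V)$ satisfies $\rho(x)=\rho(x^\beta)=\rho(x)^{-1}$, whence $\rho(x)^2=I$; as $\rho(x)$ has odd order, $\rho(x)=I$, contradicting the nontrivial action of $C_Q(\alpha)$ on $V$.

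For the kernel equality, take $x\in C_Q(\alpha)$ acting trivially on $C_V(\alpha)$ and suppose $\rho(x)\neq I$. Since $x$ commutes with $\alpha$, it preserves $V=C_V(\alpha)\oplus[V,\alpha]$, so $[V,x]\subseteq[V,\alpha]$ is nonzero with $\alpha$ acting on it as $-1$. Using the irreducibility of $V$ as a $\bar kQD$-module together with the $\beta$-conjugation that relates $\alpha$-eigenspaces and $\beta\alpha\beta$-eigenspaces, one transports the Klein-four analysis of the previous paragraph to the submodule or quotient generated by $[V,x]$: the appropriate $\alpha$-centralizer vanishes there, and the same parity-of-$q$ reduction forces $\rho(x)=I$, a contradiction. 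The main obstacle is the first assertion, since its Frobenius analogue is unconditional while here the hypothesis that $C_Q(\alpha)$ acts nontrivially on $V$ is essential; the Klein-four sub-case requires the delicate interplay between the fpf action of $\alpha\beta$ on $Q$, the parity of $q$, and the triviality of $\beta$ on $V$ described above.
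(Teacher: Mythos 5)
First, note that the paper itself does not prove this proposition; it simply cites Proposition C of \cite{EG4}, so your attempt has to stand entirely on its own. The decisive gap is in your proof of $C_V(\alpha)\neq 0$ in odd characteristic: from ``$(\alpha\beta)^2$ acts trivially on $V$'' you conclude $|\alpha\beta|\leq 2$, but the hypothesis is only $C_{VQ}(F)=1$, i.e.\ the \emph{generator} $\alpha\beta$ has no nontrivial fixed points on $VQ$; nothing prevents $(\alpha\beta)^2$ from being a nontrivial element of large order that acts trivially on $V$ while acting (even fixed-point-freely) on $Q$. Hence $D$ need not be a Klein four-group, and everything downstream --- ``$\alpha\beta$ is a fixed-point-free involution on $Q$, so $Q$ is abelian and inverted, so $x^\beta=x^{-1}$'' --- collapses when $|F|>2$. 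The contradiction is still reachable, but by a different mechanism: once $\alpha$ acts as $-1$ and $\beta$ acts trivially on $V$, every $d\in D$ acts on $V$ as a scalar $\pm 1$, so the compatibility relation $\rho(x^d)=\sigma(d)\rho(x)\sigma(d)^{-1}$ gives $[Q,D]\leq C_Q(V)$; since $C_Q(F)\leq C_{VQ}(F)=1$ makes $F$ a Carter subgroup of $QF$ (exactly as in step (1) of Proposition \ref{prop1}), one has $C_{Q/C_Q(V)}(F)=1$, and $F$ centralizing $Q/C_Q(V)$ then forces $Q=C_Q(V)$, contradicting the hypothesis that $C_Q(\alpha)$ acts nontrivially. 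This repair works uniformly in $|F|$ and $q$ and makes the parity-of-$q$ discussion unnecessary.

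The second assertion, the kernel equality, is where the real content of the proposition lies, and your sketch does not contain an argument for it. ``Transport the Klein-four analysis to the submodule or quotient generated by $[V,x]$'' is not a step one can carry out: if $[V,x]\neq 0$ then by irreducibility the $QD$-submodule it generates is all of $V$, and no vanishing of any $\alpha$-centralizer on it has been established --- indeed the first part of the proposition says the opposite. The known proofs of this statement (and of its Frobenius analogue, Proposition \ref{Fr}) go through Clifford theory for $Q$ on $V$: one studies the action of $F$ and of $\langle\alpha\rangle$ on the set of $Q$-homogeneous components, isolates the components on which $x\in C_Q(\alpha)$ could act nontrivially by comparing with $C_V(\alpha)$, and kills the action there using an identity of the type $\prod_{f\in F}x^f\in C_Q(F)=1$; none of this machinery appears in your outline. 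Two further points need attention: your use of $V=C_V(\alpha)\oplus[V,\alpha]$ and of $\pm 1$-eigenspaces requires characteristic different from $2$, so the kernel equality in characteristic $2$ needs a separate treatment; and the reduction to irreducible $V$ cannot be done by decomposing $V$ into irreducible $QD$-summands, since $\mathrm{char}\,k$ may divide $|D|$ and $V$ need not be $QD$-semisimple (one should instead pass to a $QD$-composition series and use that an element of $Q$, having order coprime to $\mathrm{char}\,k$, acts trivially on $V$ once it acts trivially on all factors).
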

\begin{proof} This is Proposition C in \cite{EG4}.
\end{proof}
The next two results were established in \cite[Lemma 1.6]{KS2} .

\begin{lem}\label{sink1}
Suppose that a group $Q$ acts by automorphisms on a group $G$. If $Q=\langle q_1,\ldots , q_n \rangle$, then $[G,Q]=[G,q_1]\cdots [G,q_n].$
\end{lem}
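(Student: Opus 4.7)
The plan is to prove the nontrivial inclusion $[G,Q]\subseteq H$, where $H:=[G,q_1]\cdots [G,q_n]$; the reverse inclusion is immediate since each $q_i\in Q$ gives $[G,q_i]\subseteq [G,Q]$.

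First I would record two preliminary facts. Using the standard identity $[xy,z]=[x,z]^y[y,z]$, one obtains $[g,q_i]^h=[gh,q_i]\,[h,q_i]^{-1}\in [G,q_i]$ for all $g,h\in G$, so each $[G,q_i]$ is normal in $G$. Consequently $H$ is itself a (normal) subgroup of $G$, so the product on the right-hand side is indeed a subgroup. I would also observe that $[G,q_i^{-1}]=[G,q_i]$, which follows from the identity $[g,q_i^{-1}]=\bigl([g^{q_i^{-1}},q_i]\bigr)^{-1}$.

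The main step is to prove, by induction on the length of a word in $q_1^{\pm 1},\ldots,q_n^{\pm 1}$ representing $q\in Q$, that $[G,q]\subseteq H$. The base case is trivial and the case of length one is handled by the preliminary observations. For the inductive step, write $q=q'\,q_i^\epsilon$ with $q'$ strictly shorter, and apply the commutator identity
\[
[g,q'q_i^\epsilon]=[g,q_i^\epsilon]\cdot [g,q']^{q_i^\epsilon}.
\]
The first factor lies in $[G,q_i]\subseteq H$. For the second factor, set $h:=[g,q']$, which lies in $H$ by the inductive hypothesis, and use the elementary identity $h^{q_i^\epsilon}=h\cdot [h,q_i^\epsilon]$; since $h\in G$, the commutator $[h,q_i^\epsilon]$ lies in $[G,q_i]\subseteq H$, and hence $[g,q']^{q_i^\epsilon}\in H$.

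The principal subtlety I anticipate is precisely this treatment of $[g,q']^{q_i^\epsilon}$. A naive approach would seek to show that $H$ itself is $Q$-invariant, but this is not transparent: conjugation by $q_i$ sends $[G,q_j]$ to $[G,q_i^{-1}q_jq_i]$, which need not be any factor of $H$. The resolution is that global $Q$-invariance of $H$ is not required; it suffices that for an individual element $h\in H\subseteq G$, the conjugate $h^{q_i^\epsilon}$ differs from $h$ only by the single commutator $[h,q_i^\epsilon]\in [G,q_i]\subseteq H$, which is precisely what makes the induction close.
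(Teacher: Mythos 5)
Your proof is correct, and the induction on word length together with the observation that $h^{q_i^\epsilon}=h\,[h,q_i^\epsilon]$ (so that full $Q$-invariance of $H$ is never needed) is exactly the standard argument for this fact. The paper itself offers no proof, only the citation to \cite[Lemma 1.6]{KS2}, and your write-up is a faithful, self-contained version of that lemma's proof; all the commutator identities you invoke are stated with the correct conventions for the definition $[g,a]=g^{-1}g^{a}$ used here.
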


\begin{lem}\label{sink2}
Let $p$ be a prime, $P$ a finite $p$-group and $Q$ a $p'$-group of automorphisms of $P$. 
\begin{itemize}
   \item  [a)] If $|[P,q]|\leq m$ for every $q\in Q$, then $|Q|$ and $|[P,Q]|$ are $m$-bounded.
\item[b)] If $r([P,q])\leq m$ for every $q\in Q$, then $r(Q)$ and $r([P,Q])$ are $m$-bounded.
\end{itemize}
\end{lem}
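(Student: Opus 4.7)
The plan is to reduce both parts to a faithful linear action of $Q$ on an elementary abelian $p$-group and then to bound $\dim_{\mathbb{F}_p}[V,Q]$ in terms of $m$; from this, the desired bounds on $|[P,Q]|$, $r([P,Q])$, $|Q|$ and $r(Q)$ follow by standard arguments.

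First, I would mod out $C_Q(P)$ so that $Q$ acts faithfully on $P$, and pass to $V = P/\Phi(P)$. By coprimality the new action is still faithful, and since $[V,q]$ is a homomorphic image of $[P,q]$, both hypotheses are inherited. Coprime action also gives $V = C_V(Q)\oplus [V,Q]$; moreover, if $k \in Q$ centralizes $[V,Q]$ then it centralizes $[V,k] \leq [V,Q]$, whence $[V,k] = [V,k,k] = 0$ by the coprime identity, forcing $k=1$. Hence $Q \hookrightarrow GL([V,Q])$, and I may replace $V$ by $[V,Q]$, so that $C_V(Q) = 0$. Once $\dim V$ and $|Q|$ (respectively their ranks) are $m$-bounded, Lemma~\ref{sink1} yields the bound on $|[P,Q]|$ (respectively $r([P,Q])$): writing $Q = \langle q_1,\dots,q_n\rangle$ with $n \leq r(Q)$ bounded, $[P,Q] = [P,q_1]\cdots [P,q_n]$ is a product of boundedly many subgroups, each of order (rank) at most $m$.

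The heart of the argument is the dimension bound $\dim V\leq f(m)$, and this is the main obstacle — the reason being that the hypothesis restricts \emph{every} element of $Q$, not merely a chosen generating set. Consider first abelian $Q$; by Maschke (applied after extending scalars to $\bar{\mathbb{F}_p}$), $V = \bigoplus_{\chi\neq 1} V_\chi$ because $V_1 = C_V(Q) = 0$, and for every $q \in Q^\#$, $\dim[V,q] = \sum_{\chi(q)\neq 1}\dim V_\chi \leq d$, where $d = \log_p m$ in part (a) and $d = m$ in part (b). Summing over $q \in Q^\#$ and using that each nontrivial character $\chi$ satisfies $|\{q : \chi(q) \neq 1\}| \geq |Q|/2$ (since $\ker\chi$ is a proper subgroup) produces $\dim V \leq 2d$. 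For general $Q$ I would combine this abelian bound (applied to suitable abelian subgroups, e.g.\ factors of a chief series) with Clifford-theoretic reductions, exploiting the $Q$-equivariance of the collection $\{[V,q]\}_{q\in Q}$ and the subadditivity $\dim[V,q_1 q_2]\leq \dim[V,q_1]+\dim[V,q_2]$. With $\dim V\leq f(m)$, part (a) finishes because $p\leq m$ (otherwise every $[V,q]$ is trivial and $Q=1$), so $|Q|\leq |GL(V)|$ is $m$-bounded; part (b) follows from the fact that a $p'$-subgroup of $GL_n(\mathbb{F}_p)$ has rank $O(n)$ independently of $p$, which together with the $m$-bound on $\dim V$ gives the $m$-bound on $r(Q)$.
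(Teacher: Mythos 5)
The paper does not actually prove this lemma; it is quoted from \cite[Lemma 1.6]{KS2}, so there is no in-paper argument to measure you against. Judged on its own terms, your proposal gets the outer reductions right (passing to $P/\Phi(P)$, faithfulness of $Q$ on $[V,Q]$ via $[V,k]=[V,k,k]$, recovering $[P,Q]$ from boundedly many $[P,q_i]$ once $d(Q)$ is bounded, and the observation $p\le m$ in part (a)), and your abelian case is correct and genuinely nice: the averaging identity $\sum_{q\in Q}\dim[V,q]=\sum_{\chi\ne 1}\dim V_\chi\,(|Q|-|\ker\chi|)\ge \tfrac{|Q|}{2}\dim V$ does give $\dim V\le 2d$ when $Q$ is abelian and $C_V(Q)=0$.

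The genuine gap is exactly where you flag "the main obstacle": the passage from abelian $Q$ to general $Q$ is not an argument, and the natural ways to read your sketch fail. Running the abelian bound down a chief (or derived) series of $Q$ via $V=C_V(A)\oplus[V,A]$ costs $2d$ per layer, and the number of layers is not a priori $m$-bounded, so the recursion does not close; likewise, writing $\dim V\le d\cdot d(Q)$ just trades the problem for bounding $d(Q)$, which is what you are trying to prove. Clifford theory and the subadditivity $\dim[V,q_1q_2]\le\dim[V,q_1]+\dim[V,q_2]$ do not by themselves break this circularity. What does close the argument is an observation you are one step away from: for \emph{any} abelian subgroup $A\le Q$, an element of $A$ centralizing $[V,A]$ also centralizes $C_V(A)$, hence all of $V$; so $A$ acts faithfully on $[V,A]$, and your own abelian bound gives $\mathbf r(A)\le 2d$. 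Thus every abelian subgroup of $Q$ has $2d$-bounded rank, and one then invokes the (nontrivial, but standard) theorem that a finite group all of whose abelian subgroups have rank at most $k$ has $k$-bounded rank (Thompson/Alperin for $p$-groups, plus the reduction of rank to Sylow ranks for general finite groups). That bounds $d(Q)$, hence $\dim V\le d\cdot d(Q)$, and the rest of your endgame goes through. Without this (or an equivalent) ingredient the proof is incomplete; and note also that your final appeal to "a $p'$-subgroup of $GL_n(\mathbb F_p)$ has rank $O(n)$" is itself a quotable but unproved theorem of the same depth.
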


We also need the following fact whose proof can be found in \cite{CP}.

\begin{lem} \label{lema 3.1}
Let $G$ be a finite group such that $\gamma_\infty(G) \leq F(G)$. Let $P$ be a Sylow $p$-subgroup of $\gamma_\infty(G)$ and $H$ be a Hall $p'$-subgroup of $G$. Then $P= [P,H]$.
\end{lem}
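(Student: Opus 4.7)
My plan is to proceed by induction on $|G|$, reducing first to the case where $\gamma_\infty(G) = P$ is an elementary abelian $p$-group, establishing that $[P,H]$ is normal in $G$, and then analysing $[P,H]$ as a $G$-invariant subgroup of $P$ via a case split.

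For the first reduction, since $\gamma_\infty(G)\le F(G)$ is nilpotent, its Hall $p'$-subgroup $K$ is characteristic in $\gamma_\infty(G)$, hence normal in $G$, and $K\le H$ (every normal $p'$-subgroup lies in any Hall $p'$-subgroup). The hypothesis is inherited by $G/K$, with the Sylow $p$-part of $\gamma_\infty(G/K)$ equal to $PK/K\cong P$ and the Hall $p'$-subgroup equal to $H/K$. If $K\ne 1$, induction gives $PK = [P,H]K$, and Dedekind's modular law together with $P\cap K = 1$ yields $P = [P,H]$. So I may assume $\gamma_\infty(G) = P$ is a $p$-group; then $G/P$ is nilpotent, $P\le O^p(G)$ (because $G/O^p(G)$ is a $p$-group, hence nilpotent), and hence $O^p(G) = HP\trianglelefteq G$. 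For the second reduction, if $\Phi(P)\ne 1$, induction applied to $G/\Phi(P)$ gives $P = [P,H]\Phi(P)$, and Burnside's basis theorem forces $P = [P,H]$. So I may further assume $P$ is elementary abelian; in particular $[P,H] = [P,HP] = [P,O^p(G)]$ is the commutator of two characteristic subgroups of $G$ and therefore normal in $G$.

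Now I would split into cases according to $[P,H]\le P$. If $[P,H] = P$ there is nothing to prove. If $0\ne [P,H]\ne P$, pass to $G/[P,H]$: the hypothesis carries over, the Sylow $p$-part of the nilpotent residual is $P/[P,H]$, the Hall $p'$-subgroup is $H$, and induction yields $P/[P,H] = [P/[P,H],H] = 1$, contradicting $[P,H]\ne P$. The remaining case $[P,H]=0$ is the real obstacle: here $H$ centralises $P$, so $C_G(P)\ge HP = O^p(G)$ and $G/C_G(P)$ is a $p$-group acting on the elementary abelian $p$-group $P$. A $p$-group acting on a nontrivial $p$-group has nontrivial fixed points, so $P\cap Z(G)\ne 1$; picking a nontrivial $z\in P\cap Z(G)$ and applying induction to $G/\langle z\rangle$ gives $P/\langle z\rangle = [P,H]\langle z\rangle/\langle z\rangle = 0$, so $P = \langle z\rangle\le Z(G)$. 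But then $G/P$ nilpotent together with $P\le Z(G)$ forces $G$ itself to be nilpotent, contradicting $\gamma_\infty(G) = P\ne 1$. The main difficulty is recognising that this final $[P,H]=0$ case must be handled separately and executing the secondary descent to $G/\langle z\rangle$.
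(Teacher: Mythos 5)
Your proof is correct. Note that the paper itself gives no argument for this lemma --- it is quoted from \cite{CP} --- so there is nothing internal to compare against; judged on its own, your induction on $|G|$ is sound and complete. The three reductions all check out: factoring out the Hall $p'$-part $K$ of the nilpotent group $\gamma_\infty(G)$ and recovering $P=[P,H]$ from $PK=[P,H]K$ via Dedekind; the Frattini quotient step (where one only needs that $[P,H]$ is a subgroup of $P$, which it is since $P\trianglelefteq G$); and the identification $[P,H]=[P,HP]=[P,O^p(G)]$ once $P$ is abelian, which gives the normality of $[P,H]$ in $G$ needed to form $G/[P,H]$. You rightly isolate $[P,H]=1$ as the substantive case, and the descent through a central element $z$ followed by ``$P\le Z(G)$ and $G/P$ nilpotent imply $G$ nilpotent'' closes it correctly. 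Two cosmetic points: the equality $O^p(G)=HP$ deserves the one-line justification that $HP/P$ is the (normal) Hall $p'$-subgroup of the nilpotent group $G/P$, so $HP\trianglelefteq G$ with $p$-power index; and the final case tacitly assumes $P\ne 1$, which should be dismissed at the outset since $P=1$ makes the claim vacuous. Neither is a gap. For comparison, the last case can also be finished without the secondary descent: since the $p$-group $G/C_G(P)$ stabilizes a chain in $P$, one gets $P\le Z_n(G)$ for some $n$, whence $G/Z_n(G)$ is nilpotent and so is $G$; your element-by-element version achieves the same thing.
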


\section{Key Propositions} We prove below a new proposition  which studies the actions of Frobenius and Frobenius-like groups and forms the basis in proving Theorem A. 
\\

\begin{proposition}\label{prop1}
Assume that $FH$ be a Frobenius group, or a Frobenius-like group of Type I or Type II with kernel $F$ and complement $H$. Suppose that $FH$ acts on a $q$-group $Q$ for some prime $q$. Let $V$ be an irreducible $\mathbb{F}_pQFH$-module where $\mathbb{F}_p$ is a field with characteristic $p$ not dividing $|Q|$ such  that $F$ acts fixed-point-freely on the semidirect product $VQ$.  Additionaly, we assume that  $q$ is coprime to $|H|$ in case where $FH$ is not Frobenius. Then $\mathbf  r([V,Q])$ is bounded in terms of   $\mathbf  r([C_V(H), C_Q(H)])$ and $|H|$.
\end{proposition}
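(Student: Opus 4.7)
My approach combines Proposition \ref{Fr}, Lemma \ref{sink2}, and Theorem \ref{theFrob} with a disjointness argument exploiting the fixed-point-free action of $F$ on $V$. First I would make two standard reductions: since $V$ is $QFH$-irreducible, the submodule $[V,Q]$ is either $0$ (trivial case) or all of $V$, so I assume $V = [V,Q]$; and since the kernel $C_Q(V)$ is $FH$-invariant, passing to $Q/C_Q(V)$ I may further assume $Q$ acts faithfully on $V$ without altering $\mathbf{r}([V,Q])$.

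Next I would bound $\mathbf{r}(C_Q(H))$. Set $X = [C_V(H), C_Q(H)]$ and $r_1 = \mathbf{r}(X)$. Proposition \ref{Fr} states that the kernels of $C_Q(H)$ on $V$ and on $C_V(H)$ coincide, so both are trivial after the faithfulness reduction. The Maschke decomposition $C_V(H) = C_{C_V(H)}(C_Q(H)) \oplus X$, valid since the $q$-group $C_Q(H)$ acts coprimely on the $\mathbb{F}_p$-space $C_V(H)$, forces $C_Q(H)$ to act faithfully on $X$. Since $\mathbf{r}([X,c]) \leq r_1$ for every $c \in C_Q(H)$, Lemma \ref{sink2}(b) yields $\mathbf{r}(C_Q(H)) \leq f_1(r_1)$ for some function $f_1$.

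The heart of the argument is to bound the complementary piece $Y := C_{C_V(H)}(C_Q(H))$ via disjointness of its $F$-translates. For $f \in F$, the translate satisfies $Y^f = C_V(H^f C_Q(H^f))$, and any $v \in Y \cap Y^f$ is fixed by $\langle H, H^f \rangle$. For $f$ outside $N_F(H)$, this subgroup intersects $F$ nontrivially: in the Frobenius case, $N_F(H) = 1$ and since $H$ is self-normalizing, $\langle H, H^f \rangle$ is strictly larger than $H$, forcing $\langle H, H^f \rangle \cap F \neq 1$ via the image analysis in $FH/F$; in Frobenius-like Types I and II the same conclusion follows by passing to the Frobenius quotient $FH/F'$, where $HF'/F'$ and $H^f F'/F'$ are distinct complements, and lifting back. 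Since $F$ acts fixed-point-freely on $V$, any vector fixed by a nontrivial element of $F$ must be zero, so the $Y^f$ are pairwise disjoint across cosets of $N_F(H)$ in $F$, giving
\[
[F : N_F(H)] \cdot \mathbf{r}(Y) \leq \mathbf{r}(V).
\]
Applying Theorem \ref{theFrob} to the $FH$-action on $V$ (valid because $C_V(F) = 1$) yields $\mathbf{r}(V) \leq f_2(\mathbf{r}(C_V(H)), |H|) = f_2(r_1 + \mathbf{r}(Y), |H|)$. Combining this with $[F : N_F(H)] \geq |H| + 1$, which holds in all three cases by the Frobenius-like structure, then solves for $\mathbf{r}(Y)$ in terms of $r_1$ and $|H|$, whence $\mathbf{r}([V,Q]) = \mathbf{r}(V)$ is bounded as claimed.

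The principal obstacle is the disjointness step: one must carry out separate case analyses for the Frobenius case and the Frobenius-like Types I and II, verifying both that $\langle H, H^f \rangle \cap F \neq 1$ for $f \notin N_F(H)$ and that $Y^f = Y$ for $f \in N_F(H)$ (so that the sum is indexed by cosets). A secondary subtlety is that solving the combined inequality for $\mathbf{r}(Y)$ requires the bound in Theorem \ref{theFrob} to grow sufficiently mildly in its first argument, which can be traced through its proof.
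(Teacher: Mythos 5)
Your opening reductions ($V=[V,Q]$, faithfulness of $Q$ on $V$) and the bound on $\mathbf r(C_Q(H))$ via Proposition \ref{Fr} and Lemma \ref{sink2} are sound and agree with the paper. The disjointness argument that is supposed to bound $Y=C_{C_V(H)}(C_Q(H))$, however, contains two independent fatal flaws. First, the hypothesis that $F$ acts fixed-point-freely on $VQ$ means $C_{VQ}(F)=1$, i.e.\ the \emph{whole group} $F$ has no nontrivial fixed points (compare Proposition \ref{prop2}, where it is written explicitly as $C_{VQ}(F)=1$, and Theorem A, where it is $C_G(F)=1$); it does not say that every nontrivial element of $F$ acts without fixed points. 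A vector in $Y\cap Y^f$ is fixed by the nontrivial subgroup $\langle H,H^f\rangle\cap F$, but this is in general a proper subgroup of $F$ and may have a large fixed-point space, so you cannot conclude $Y\cap Y^f=0$. This is precisely why the paper must pass through Clifford theory and the $Q$-homogeneous components instead of a direct counting argument. Second, even if the translates $Y^f$ did intersect pairwise trivially, that would not give $[F:N_F(H)]\cdot\mathbf r(Y)\le\mathbf r(V)$: pairwise trivial intersection of subspaces does not make their sum direct (the $p+1$ lines of $\mathbb{F}_p^2$ already violate the asserted inequality), so ranks need not add up.

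A further soft spot is the final step, where you must "solve" the inequality for $\mathbf r(Y)$; this requires the implicit function in Theorem \ref{theFrob} to grow more slowly than $[F:N_F(H)]\ge|H|+1$ times its first argument, which the theorem as stated does not provide and which cannot be assumed. For comparison, the paper's proof goes an entirely different way: after reducing to $Q=\langle c^F\rangle$ abelian with $c\in C_{Z(Q)}(H)$ of order $q$, it decomposes $V$ into $Q$-homogeneous components, analyses the $H$- and $F$-orbit structure on these components separately in the Frobenius, Type I and Type II cases, bounds the contribution of components lying in regular $H$-orbits by comparing with $[C_V(H),c]$, and eliminates the remaining configuration by the identity $1=\prod_{f\in F_{q'}}c^f\in c^{|F_1\cap F_{q'}|}C_Q(U)$. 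Some substitute for that machinery is needed; the disjointness idea as it stands does not close the gap.
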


\begin{proof} Let $\mathbf  r([C_V(H), C_Q(H)])=s.$ We may assume that $V=[V,Q]$ and hence $C_V(Q)=0$. 
By Clifford's Theorem, $V=V_1\oplus \cdots \oplus V_t$,  direct sum of 
 of $Q$-homogeneous components $V_i$ , which are transitively permuted by $FH$. Set  $\Omega =\{V_1,\dots, V_t\}$ and fix an $F$-orbit $\Omega_1$ in $\Omega$. Throughout, $W=\Sigma_{U\in \Omega_1}U.$
 
 Now, we split the proof into a sequence of steps.\\

{\it (1) We may assume that $Q$ acts faithfully on $V$.  Furthermore $Ker(C_Q(H) \ \textrm{on} \ C_V(H)) = Ker(C_Q(H) \ \textrm{on} \ V)=1$. }

\begin{proof} Suppose that $Ker(Q \ \rm{on} \ V)\neq 1$  and set $\overline{Q} =Q/Ker(Q \ \rm{on} \ V)$. Note that  since $C_Q(F)=1$,  $F$ is a Carter subgroup of $QF$ and hence also a Carter subgroup of  $\overline{Q}F$ which implies that $C_{\overline{Q}}(F)=1$. Notice that  the equality $\overline{C_Q(H)}=C_{\overline{Q}}(H)$ holds in case $FH$ is Frobenius (see \cite{ENP} Theorem 2.3). The same equality holds in case where $FH$ is non-Frobenius due to the coprimeness condition $(q,|H|)=1.$ Then  $[C_V(H),C_Q(H)]=[C_V(H),C_{\overline{Q}}(H)]$ and so we may assume that $Q$ acts faithfully on $V$. 
Notice that  by Proposition \ref{Fr} we have
$$Ker(C_Q(H) \ \textrm{on} \ C_V(H)) = Ker(C_Q(H) \ \textrm{on} \ V)=1$$ establishing the claim. 
\end{proof}

{\it (2) We may assume that $Q=\langle c^F \rangle$ for any nonidentity element $c\in C_{Z(Q)}(H)$ of  order $q$. In particular $Q$ is abelian.} 

\begin{proof} We obtain that $C_{Z(Q)}(H)\ne 1$ as $C_Q(F)=1$ by Proposition \ref{Fr}.  Let  now $1\ne c \in C_{Z(Q)}(H)$ of order $q$ and consider $\langle c^{FH} \rangle=\langle c^F \rangle$, the minimal $FH$-invariant subgroup containing $c$. Since $V$  is an irreducible $QFH$-module on which $Q$ acts faithfully we have that $V=[V,\langle c^F \rangle]$. Thus we may assume that $Q=\langle c^F \rangle$ as claimed. 
\end{proof}

{\it (3) $V=[V,c]\cdot [V,c^{f_1}] \cdots[V,c^{f_n}]$ where $n$ is a $(s,|H|)$-bounded number. Hence it suffices to bound $\mathbf r([W,c])$. }

\begin{proof}
Notice that the group $C_Q(H)$ embeds in the automorphism group of $[C_V(H),C_Q(H)]$ by step $\it(1)$. Then $C_Q(H)$ has $s$-bounded rank by Lemma  \ref{sink2}. This yields by Theorem \ref{theFrob}  that $Q$  has $(s,|H|)$-bounded rank. Thus, there exist $f_1=1,\ldots ,f_n$ in $F$ for an $(s,|H|)$-bounded number $n$ such that  $Q=\langle  c^{f_1},\ldots,c^{f_n} \rangle$. Now $V=[V,c]\cdot [V,c^{f_2}] \cdots[V,c^{f_n}]=\prod_{i=1}^n [V,c]^{f_i}$ by Lemma \ref{sink1}.  This shows that we need only to bound $\mathbf r([V,c])$ suitably. In fact it suffices to show that $\mathbf r([W,c])$ is suitably bounded as $V=\Sigma_{h\in H}W^h.$
\end{proof}

\textit{(4) $H_1=Stab_H(\Omega_1)\ne1$. Furthermore the rank of the sum of members of $\Omega_1$ which are not centralized by $c$ and contained  in a regular $H_1$-orbit, is suitably bounded.}

\begin{proof} Fix  $U\in \Omega_1$ and set $Stab_F(U)=F_1$. Choose a transversal $T$ for $F_1$ in $F.$ Let   $W=\sum_{t\in T}U^{t}$ where $T$ is a transversal for $F_1$ in $F$ with $1\in T.$ Then we have $V=\sum_{h\in H}W^h$. Notice  that $[V,c]\ne 0$ by $\it(1)$ which implies that $[W,c]\neq 0$ and hence $[U^t,c]=U^t$ for some $t\in T$. Without loss of generality we may assume that $[U,c]=U.$ 

Suppose that $Stab_H(\Omega_1)=1$. Then we also have $Stab_H(U^t)=1$ for all $t\in T$  and hence the sum $X_t=\sum_{h\in H}U^{th}$ is direct for all $t\in T.$  Now, $U\leq X_1$.  It holds that $$C_{X_t}(H)=\{ \sum_{h\in H}v^{h} \ : \ v\in U^t\}.$$ Then  $|U|=|C_{X_1}(H)|=|[C_{X_1}(H),c]|\leq |[C_V(H),C_Q(H)]|$ implies $\mathbf  r(U)\leq s.$ On the other hand $V=\bigoplus_{t\in T}X_t$ and $$[C_V(H),c]=\bigoplus \{ [C_{X_t}(H),c] : t\in T \,\, \text{with }\,\,[U^t,c]\ne 0\}\leq [C_V(H),C_Q(H)].$$ In particular, $\{t\in T : [U^t,c]\ne 0\}$ is suitably bounded whence $\mathbf  r([W,c])$ is $(s,|H|)$-bounded.  Hence we may assume that $Stab_H(\Omega_1)\ne 1.$

Notice that every element of a regular $H_1$-orbit in $\Omega_1$ lies in a regular $H$-orbit in $\Omega$. Let  $U\in \Omega_1$ be contained in a regular $H_1$-orbit of $\Omega_1.$ Let $X$ denote the sum of the members of the $H$-orbit of $U$ in $\Omega$, that is $X=\bigoplus_{h\in H}U^h$.  Then $C_X(H)=\{ \sum_{h\in H}v^{h} \ : \ v\in U\}$. If $[U,c]\ne 0$ then by  repeating the same argument in the above paragraph we show that  $\mathbf  r(U)\leq s$ is suitably bounded. On the other hand  the number, say $m$,  of all $H$-orbits in $\Omega$ containing a member $U$ such that $[U,c]\ne 0$ is suitably bounded because $m\leq \mathbf r([C_V(H),c])\leq s.$  It follows then that the rank of the sum of members of $\Omega_1$ which are not centralized by $c$ and contained  in a regular $H_1$-orbit, is suitably bounded. 
\end{proof}

{\it (5) We may assume that $FH$ is not Frobenius.}

\begin{proof} Assume the contrary that $FH$ is Frobenius.  Let $H_1=Stab_H(\Omega_1)$ and pick $U\in \Omega_1$. Set $S=Stab_{FH_1}(U)$ and $F_1=F\cap S$. Then $|F:F_1|=|\Omega_1|=|FH_1:S|$ and so $|S:F_1|=|H_1|$. Since $(|F_1|,|H_1|)=1$, by the Schur-Zassenhaus theorem there exists a complement, say $S_1$ of $F_1$ in $S$ with $|H_1|=|S_1|$. Therefore there exists a conjugate of $U$ which is $H_1$-invariant. There is no loss in assuming that $U$ is $H_1$-invariant.  On the other hand if  $1\neq h\in H_1$ and $x\in F$ such that $U^{xh}=U^x$, then $[h,x]\in Stab_{F}(U)=F_1$ and so $F_1x=F_1x^h=(F_1x)^h$. This implies that $ F_1x\cap C_F(h)$ is nonempty. Now the Frobenius action of $H$ on $F$ forces that $x\in F_1$. This  means that for each $x\in F\setminus F_1$ we have $Stab_{H_1}(U^x)=1$. Therefore $U$ is the unique member of $\Omega_1$ which is $H_1$-invariant and all the $H_1$-orbits other than  $\{U\}$ are regular.  By $\it(4)$,  the rank of the sum of all members of $\Omega_1$ other than $U$ is is suitably bounded. In particular $\mathbf r(U)$ and hence $\mathbf r([W,c])$ is suitably bounded in case where $[U^x,c]\ne 0$ for some $x\in F\setminus F_1$. Thus we may assume that $c$ is trivial on $U^x$ for all $x\in F\setminus F_1$. Now we have  $[W,c]=[U,c]=U.$

Due to the action by scalars of the abelian group $Q$ on $U$, it holds that $[Q,F_1]\leq C_Q(U)$. We also know that $c^x$ is trivial on $U$ for each $x\in F\setminus F_1$.  Since $C_Q(F)=1$, there are prime divisors of $|F|$ different from $q.$ Let $F_{q'}$ denote the $q'$-Hall subgroup of $F.$ Clearly we have $C_Q(F_{q'})=1$. Let now $y=\prod_{f\in F_{q'}}c^f$. Then we have $$1=y=(\prod_{f\in F_1\cap F_{q'} }c^f)(\prod_{f\in F_{q'}\setminus F_1}c^f)\in c^{|F_1\cap F_{q'}|}C_Q(U).$$ As a consequence  $c\in C_Q(U)$, because $q$ is coprime to $|F_{q'}|$. This contradiction establishes the claim. 
\end{proof}

{\it (6) We may assume that the group $FH$ is Frobenius-like of Type II.}

\begin{proof} On the contrary we assume that $FH$ is Frobenius-like of Type I.  By $\it(4),$ we have $H_1=Stab_H(\Omega_1)\ne 1$. Choose a transversal $T_1$ for $H_1$ in $H.$ Now $V=\bigoplus_{h\in T_1}W^h.$ Also we can guarantee the existence of a conjugate of $U$ which is $H_1$-invariant by means of the Schur-Zassenhaus Theorem as in $\it(5)$. There is no loss in assuming that $U$ is $H_1$-invariant.

  Set now $Y=\Sigma_{x\in F'}U^x$ and $F_2=Stab_F(Y)$ and $F_1=Stab_F(U).$  Clearly, $F_2=F'F_1$ and $Y$ is $H_1$-invariant.  Notice that  for all nonidentity $h\in H$, we have $C_F(h)\leq F'\leq F_2$ . Assume first that $F=F_2$. This forces that  we have  $V=Y$.  Clearly, $Y\ne U$, that is $F'\not \leq F_1$, because otherwise  $Q=[Q,F]=1$ due to the scalar action of the abelian group $Q$ on $U$.  So $F'\cap F_1=1$ which implies that $|F:F_1|$ is a prime. Then $F_1\unlhd F$ and $F'\leq F_1$ which is impossible. Therefore $F\ne F_2$. 
  
  If $1\neq h\in H$ and $t\in F$ such that ${Y}^{th}={Y}^{t}$ then $[h,t]\in F_2$. Now,  $F_2t=F_2t^h=(F_2t)^h$ and this implies the existence of an element in $F_2t\cap C_F(h)$. Since $ C_F(h)\leq F'\leq F_2$ we get $t\in F_2$. In particular, for each $t\in F\setminus F_2$ we have $Stab_{H}(Y^t)=1$. 

Let  $S$ be a transversal for $F_2$ in $F$. For any  $t\in S\setminus F_2$ set $Y_t=Y^t$ and consider $Z_t=\Sigma_{h\in H}{Y_t}^h$.  Notice that $V=Y\oplus \bigoplus_{t\in S\setminus F_2}Z_t$. As the sum $Z_t$ is direct we have $$C_{Z_t}(H)=\{ \sum_{h\in H}v^{h} \ : \ v\in Y_t\}$$ with $|C_{Z_t}(H)|=|Y_t|.$ Then $\mathbf r([Y_t,c])=\mathbf r([C_{Z_t}(H),c])\leq s$ for each $t\in S\setminus F_2$ with $[Y_t,c]\ne 0$.
On the other hand, $$ \Sigma\{\mathbf r([C_{Z_t}(H),c]) : t\in S \,\,\text{with}\,\,[Y_t,c]\ne 0\}\leq \mathbf r([C_V (H), c])\leq s$$   whence $|\{t\in S\setminus F_2 : [Y_t,c]\ne 0\}|$  is suitably bounded.  So the claim is established if there exists $t\in S\setminus F_2$ such that $[Y_t,c]\ne 0$, since we have $V=Y\oplus \bigoplus_{t\in S\setminus F_2}Z_t$.  Thus we may assume that $c$ is trivial on $\bigoplus_{t\in S\setminus F_2}Z_t$ and hence $[V,c]=[Y,c].$

There are two cases now: We have either $F'\cap F_1=1$ or $F'\leq F_1.$  First assume that $F'\leq F_1.$ Then we get $F_1=F_2$ because $F_2=F'F_1.$ Now $U=Y.$ Due to the  action by scalars of the abelian group $Q$ on $U$, it holds that $[Q,F_1]\leq C_Q(U)$. From this point on we can proceed as in the proof of step $\it(5)$ and observe that $C_Q(F_{q'})=1$. Letting now $y=\prod_{f\in F_{q'}}c^f$,  we have $$1=y=(\prod_{f\in F_1\cap F_{q'} }c^f)(\prod_{f\in F_{q'}\setminus F_1}c^f)\in c^{|F_1\cap F_{q'}|}C_Q(U).$$implying that  $c\in C_Q(U)$, because $q$ is coprime to $|F_{q'}|$.

Thus we have $F_1\cap F'=1$. First assume that $H_1=H$. Then $Y$ is $H$-invariant and   $F_1H$ is a  Frobenius group. Note that $C_U(F_1)=1$ as $C_V(F)=1$, and hence $C_{Y}(F_1)=1$  since $F'\leq Z(F).$ We consider now the action of $QF_1H$ on $Y$ and the fact that $\mathbf r([C_{Y}(H),C_Q(H)])\leq s.$ Then step $\it(5)$, we obtain that $\mathbf r(Y)=\mathbf r([Y,Q])$ is $(s,|H|)$-bounded. Next assume that $H_1\ne H.$ Choose a transversal for $H_1$ in $H$ and set $Y_1=\Sigma_{h\in T_1}Y^h$. Clearly this sum is direct and hence $$C_{Y_1}(H)=\{ \sum_{h\in T_1}v^{h} \ : \ v\in Y\}$$ with $|[C_{Y_1}(H),c]|=|[Y,c]|.$ Then $\mathbf r([Y,c])=\mathbf r([C_{Y_1}(H),c])\leq s$
establishing claim $\it(6)$.
 \end{proof}
 
 {\it (7) The proposition follows.}

\begin{proof} From now on $FH$ is a Frobenius-like group of Type II, that is, $H$ and $C_F(H)$ are of  prime orders.  By step $\it(4)$ we have  $H=H_1 =Stab_H( \Omega_1)$ since $|H|$ is a prime.  Now $V=W$. We may  also assume by the Schur-Zassenhaus theorem as in the previous steps that there is an $H$-invariant element, say $U$ in $\Omega$.  Let $T$ be a transversal for $F_1=Stab_F(U)$ in $F$.  Then  $F= \bigcup_{t\in T}{F_1}t$ implies  $V=\bigoplus_{t\in T}U^t$.  It should also be noted that we have $|\{t\in T : [U^t,c]\ne 0\}|$ is suitably bounded as $$[C_V(H),c]=\bigoplus \{ [C_{X_t}(H),c] : t\in T \,\, \text{with }\,\,[U^t,c]\ne 0\}\leq [C_V(H),C_Q(H)]$$ where $X_t=\bigoplus_{h\in H}U^{th}$.

Let $X$ be the sum of the components of all regular $H$-orbits on $\Omega$, and let $Y$ denote the sum of all $H$-invariant elements of $\Omega$. Then $V=X\oplus Y.$  Suppose that ${U}^{th}={U}^t$ for $t\in T$ and $1\ne h\in  H$. Now $[t,h]\in F_1$ and so the coset $F_1t$ is fixed by $H$. Since the orders of $F$ and $H$ are relatively prime we may assume that $t\in C_F(H).$ Conversely for each $t\in C_F(H)$,  ${U}^t$ is $H$-invariant. Hence the number of components in $Y$ is $|T\cap C_F(H)|=|C_F(H):C_{F_1}(H)|$ and so we have either $C_F(H)\leq F_1$ or not. 

If $C_F(H)\not\leq F_1$ then $C_{F_1}(H)=1$ whence $F_1H$ is Frobenius group acting on $U$ in such a way that $C_{U}(F_1)=1$.  Then  $\mathbf  r(U)$ is $(s,|H|)$-bounded by step $\it(5)$ since  $\mathbf  r([C_{U}(H),C_Q(H)])\leq s$ holds. This forces that $\mathbf r([V,c])$ is bounded suitably and hence the claim is established.

Thus we may assume that $C_F(H)\leq F_1.$ Then  $Y=U$ is the unique $H$-invariant $Q$-homogeneous component.  If $[U^t,c]\ne 0$ for some $t\in F\setminus F_1$ we can bound $\mathbf r(U)$ and hence $\mathbf r([V,c])$ suitably. Thus we may assume that $c$ is trivial on $U^t$ for each $t\in F\setminus F_1.$ Due to the  action of the abelian group $Q$ on $U$, it holds that $[Q,F_1]\leq C_Q(U)$.  From this point on we can proceed as in the proof of step $\it(5)$ and observe that $C_Q(F_{q'})=1$. Letting now $y=\prod_{f\in F_{q'}}c^f$,  we have $$1=y=(\prod_{f\in F_1\cap F_{q'} }c^f)(\prod_{f\in F_{q'}\setminus F_1}c^f)\in c^{|F_1\cap F_{q'}|}C_Q(U).$$implying that  $c\in C_Q(U)$, because $q$ is coprime to $|F_{q'}|$.  This final contradiction completes the proof of Proposition 3.1.
\end{proof}
\end{proof}

The next proposition  studies the action of a dihedral group of automorphisms  and is essential in proving Theorem B. 

\begin{proposition}\label{prop2}
Let $D= \langle\alpha, \beta \rangle$ be a dihedral group generated by two involutions $\alpha$ and $\beta$. Suppose that $D$ acts on a $q$-group $Q$ for some prime $q$. Let $V$ be an irreducible $\mathbb{F}_pQD$-module where $\mathbb{F}_p$ is a field with characteristic $p$ not dividing $|Q|$. Suppose that $C_{VQ}(F)=1$ where $F=\langle \alpha\beta\rangle$. If $max\{\mathbf r([C_V(\alpha), C_Q(\alpha)]),\mathbf r([C_V(\beta), C_Q(\beta)])\}\leq s$, then $\mathbf r([V,Q])$ is $s$-bounded. 
\end{proposition}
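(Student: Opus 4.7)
The plan is to follow the structure of the proof of Proposition \ref{prop1}, substituting Theorem \ref{theDih} and Proposition \ref{dih} for Theorem \ref{theFrob} and Proposition \ref{Fr} at the appropriate points. First I would reduce to the case $V=[V,Q]$, so that $Q$ acts faithfully on $V$ and $C_V(Q)=0$. In this reduced setting, Proposition \ref{dih} applied to $\alpha$, and separately to $\beta$, yields
\[
Ker(C_Q(\alpha) \ \textrm{on} \ C_V(\alpha)) = Ker(C_Q(\alpha) \ \textrm{on} \ V)=1
\]
together with the analogous equality for $\beta$, whenever the corresponding centralizer acts nontrivially on $V$.

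Next I would bound $\mathbf r(Q)$ by an $s$-bounded function. Since $\mathbf r([C_V(\alpha),c])\le \mathbf r([C_V(\alpha),C_Q(\alpha)])\le s$ for every $c\in C_Q(\alpha)$, Lemma \ref{sink2}(b) produces an $s$-bound on $\mathbf r(C_Q(\alpha))$, and symmetrically on $\mathbf r(C_Q(\beta))$. Because $C_{VQ}(F)=1$ forces $C_Q(\alpha\beta)=1$, Theorem \ref{theDih}(b) then yields an $s$-bound on $\mathbf r(Q)$. Theorem \ref{theDih}(a) provides $Q=C_Q(\alpha)C_Q(\beta)$, so $Q$ admits a generating set $\{c_1,\dots,c_k\}$ of $s$-bounded cardinality with each $c_i\in C_Q(\alpha)\cup C_Q(\beta)$. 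Lemma \ref{sink1} then turns $V=[V,Q]$ into $V=\sum_i[V,c_i]$, and it suffices to bound $\mathbf r([V,c])$ when $c\in C_Q(\alpha)$, the case $c\in C_Q(\beta)$ being symmetric.

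For the bound on $\mathbf r([V,c])$, I would apply Clifford's theorem to obtain the $Q$-homogeneous decomposition $V=V_1\oplus\cdots\oplus V_t$, group the $V_i$ into $F$-orbits $\Omega_1,\dots,\Omega_m$, and set $W_j=\sum_{V_i\in\Omega_j}V_i$. Since $c\in Q$ stabilizes each $V_i$, we have $[V,c]=\bigoplus_j[W_j,c]$. The involution $\alpha$ permutes the $F$-orbits with orbits of length at most two. For each $\alpha$-swapped pair $\{\Omega_j,\Omega_{j'}\}$, $C_{W_j+W_{j'}}(\alpha)$ is the diagonal $\{w+w^\alpha:w\in W_j\}$, contributing a direct summand of rank $\mathbf r([W_j,c])$ to $[C_V(\alpha),c]$; the total over all such pairs is bounded by $\mathbf r([C_V(\alpha),C_Q(\alpha)])\le s$. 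For each $\alpha$-fixed $F$-orbit $\Omega_j$, the subspace $W_j$ is $D$-invariant with $F$ acting transitively on $\Omega_j$ and $C_{W_j}(F)=0$; here I would transplant the Schur--Zassenhaus and scalar-action arguments of steps (4)--(7) in the proof of Proposition \ref{prop1} into $W_j$, using Proposition \ref{dih} in place of Proposition \ref{Fr}, to bound $\mathbf r([W_j,c])$ and the number of such orbits with $[W_j,c]\ne 0$ by $s$-bounded quantities.

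The main obstacle is the $\alpha$-fixed $F$-orbit analysis: on such an $\Omega_j$ the involution $\alpha$ may either fix some $V_i\in\Omega_j$ or act without fixed points. The fixed-point-free subcase reduces to the diagonal argument from the swapped pairs, but in the fixed-point subcase one must run Schur--Zassenhaus inside $Stab_D(V_i)$ to obtain an $\alpha$-invariant $V_i$, then exploit the scalar action of the (reduced) abelian $Q$ on $V_i$ together with $C_Q(F_{q'})=1$, where $F_{q'}$ denotes the $q'$-Hall subgroup of $F$, to derive a contradiction exactly as in step (5) of the proof of Proposition \ref{prop1}.
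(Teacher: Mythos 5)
Your plan follows the paper's proof of Proposition \ref{prop2} in all essentials: the same initial reductions (faithfulness and the kernel equalities via Proposition \ref{dih}, an $s$-bound on $\mathbf r(Q)$ via Lemma \ref{sink2} and Theorem \ref{theDih}), the same Clifford decomposition into $F$-orbits, the diagonal argument for components moved by $\alpha$, and the scalar-action/$C_Q(F_{q'})=1$ contradiction for an $\alpha$-invariant component. Two local points need repair.

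First, the Schur--Zassenhaus step is both inapplicable and unnecessary. Here $|\langle\alpha\rangle|=2$ while $F_1=Stab_F(U)$ may have even order, so there is no coprimality and you get neither a complement of the right kind nor conjugacy of complements; indeed it can genuinely happen that no homogeneous component is $\alpha$-invariant even though $\alpha$ preserves the $F$-orbit. But in that situation every $\alpha$-orbit on $\Omega$ has length $2$ and your diagonal argument already finishes the proof. The paper instead notes that, since $\alpha$ inverts the cyclic group $F$, the coset $F_1t$ of an $\alpha$-invariant component has order at most $2$ in $F/F_1$, so there are at most two $\alpha$-invariant components; one only runs the endgame when such a component exists and $c$ is nontrivial on it. So you should simply split on whether $\alpha$ fixes a component rather than trying to manufacture one. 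Second, the scalar-action endgame requires $Q$ to act on a homogeneous component by scalars, hence requires the prior reduction to $Q=\langle c^F\rangle\leq Z(Q)$ with $c\in C_{Z(Q)}(\alpha)$ of order $q$, exactly as in step (2) of the proof of Proposition \ref{prop1}; your alternative generation of $Q$ by $s$-boundedly many elements of $C_Q(\alpha)\cup C_Q(\beta)$ reduces you to an arbitrary $c\in C_Q(\alpha)$, for which the product $\prod_{f\in F_{q'}}c^f$ need not even be well defined as a single group element and the scalar action is unavailable. Perform the central-element reduction first (as the paper does) and then generate $Q$ by boundedly many $F$-conjugates of that $c$; with these adjustments your argument goes through.
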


\begin{proof} 

We set $H=\langle \alpha\rangle$.  So $D=FH$.  By Lemma \ref{sink1} and Theorem \ref{theDih}, we have $[V,Q]=[V, C_Q(\alpha)][V,C_Q(\beta)]$. Then it is sufficient to bound the rank  of $[V,C_Q(H)]$. Following the same steps as in the proof of Proposition \ref{prop1} by replacing Proposition 2.3 by Proposition 2.4,  we observe that $Q$ acts faithfully on $V$ and $Q=\langle c^F\rangle$ is abelian with $c\in C_{Z(Q)}(H)$ of order $q$. Furthermore $Ker(C_Q(H) \ \textrm{on} \ C_V(H)) = Ker(C_Q(H) \ \textrm{on} \ V)=1$. Note that it suffices to bound $\mathbf r([V,c])$ suitably.

Let $\Omega$ denote the set of  $Q$-homogeneous components of the irreducible $QD$-module $V.$ Let $\Omega_1$ be  an $F$-orbit of $\Omega$ and set $W=\Sigma_{U\in \Omega_1}U.$ Then we have $V=W+W^{\alpha}$.  Suppose that $W^{\alpha}\ne W$. Then for any $U\in \Omega_1$ we have $Stab_H(U)=1$.  Let  $T$ be a tranversal for $Stab_F(U)=F_1$ in $F$ . It holds that  $V=\Sigma_{t\in T}X_t$ where $X_t=U^t+U^{t\alpha}.$  Now $[V,c]=\Sigma_{t\in T}[X_t,c]$ and $C_V(H)=\Sigma_{t\in T} C_{X_t}(H)$ where $C_{X_t}(H)=\{w+w^{\alpha} : w\in U^t\}$.   Since $[V,c]\ne 0$ there exists $t\in T$ such that $[U^t,c]\ne 0$, that is $[U^t,c]=U^t.$  Then $[C_{X_t}(H),c]=C_{X_t}(H).$ Since  $\mathbf  r([C_V(H),C_Q(H)])\leq s$ we get $\mathbf  r(U)=\mathbf  r(C_{X_t}(H))\leq s$.  Furthermore it follows that $|\{t\in T : [U^t,c]\ne 0\}|$ is $s$-bounded and as a consequence $\mathbf  r([V,c])$ is suitably bounded.  Thus we may assume that $W^{\alpha}=W$ which  implies that $\Omega_1=\Omega$ and $H$ fixes an element, say $U$, of $\Omega$ as desired.

 Let $U^t\in \Omega$ be $H$-invariant. Then $[t,\alpha]\in F_1.$ On the other hand $t^{-1}t^{\alpha}=t^{-2}$ since $\alpha$ inverts $F$.  So $F_1t$ is an element of $F/F_1$ of order at most $2$ which implies that the number of $H$-invariant elements of $\Omega$ is at most $2$.  Let now $Y$ be the sum of all $H$-invariant elements of $\Omega$. Then $V=Y\oplus \bigoplus_{i=1}^m X_i$ where  $X_1,\ldots X_m$ are the sums of elements in  $H$-orbits of length $2.$  Let $X_i=U_i\oplus U_i^{\alpha}$. Notice that if $[U_i,c]\ne 0$ for some $i$, then we obtain $\mathbf  r(U)=\mathbf  r(U_i)\leq s$ by a similar argument as above. On the other hand we observe that the number of $i$ for which $[U_i,c]\ne 0$ is $s$-bounded by the the hypothesis that $\mathbf r([C_V(H),c])\leq s$.  It follows now that $\mathbf r([V,c])$ is suitably bounded in case where $[U_i,c]\ne 0$ for some $i$. 

Thus we may assume that $c$ centralizes  $\bigoplus_{i=1}^m X_i$ and that $[U,c]=U$.  Due to the  scalar action by scalars of the abelian group $Q$ on $U$, it holds that $[Q,F_1]\leq C_Q(U)$. As $F_1\unlhd FH$, we have $[Q,F_1]\leq  C_Q(V)=1$. Clearly we have $C_Q(F_{q'})=1$ where $F_{q'}$ denotes the Hall $q'$-part of $F$ whose existence is guaranteed by the fact that $C_Q(F)=1.$ Let now $y=\prod_{f\in F_{q'}}c^f$. Then we have $$1=y=(\prod_{f\in F_1\cap F_{q'} }c^f)(\prod_{f\in F_{q'}\setminus F_1}c^f)\in c^{|F_1\cap F_{q'}|}C_Q(U).$$ As a consequence  $c\in C_Q(U)$, because $q$ is coprime to $|F_{q'}|$. This contradiction completes the proof of Proposition \ref{prop2}.

\end{proof}

\section{Proofs of theorems}

Firstly, we shall give a detailed proof  for Theorem A part (b). The proof of Theorem A (a)  can be easily obtained by just obvious modifications of the proof of part (b).

First, we assume that $G = PQ$ where $P$ and $Q$ are $FH$-invariant subgroups such that $P$ is a normal $p$-subgroup for a prime $p$ and $Q$ is a nilpotent $p'$-group with $|[C_P(H),C_Q(H)]|=p^s$. We shall prove that $\mathbf r(\gamma_{\infty}(G))$ is $((s,|H|)$-bounded. Clearly $\gamma_{\infty}(G)=[P,Q]$. Consider an unrefinable $FH$-invariant normal series $$P=P_{1}>P_{2}>\cdots>P_{k}>P_{k+1}=1.$$ Note that its factors $P_i/P_{i+1}$ are elementary abelian. Let $V=P_{k}$.  Since $C_V(Q)=1$, we have that $V=[V,Q]$. We can also assume that $Q$ acts faithfully on $V$. Proposition \ref{prop1} yields  that $\mathbf r(V)$ is $(s, |H|)$-bounded.  Set $S_i=P_i/P_{i+1}$. If $[C_{S_i}(H),C_Q(H)]=1$, then $[S_i,Q]=1$ by Proposition  \ref{Fr}. Since $C_P(Q)=1$ we conclude that each factor $S_i$ contains a nontrivial  image of an element of $[C_P(H),C_Q(H)]$. This forces that $k \leq s$.  Then we proceed by induction on $k$ to obtain that  $\mathbf r([P,Q])$ is an $(s,|H|)$-bounded number, as desired. 

Let $F(G)$ denote the Fitting subgroup of a group $G$. Write $F_{0}(G)=1$ and let $F_{i+1}(G)$ be the inverse image of $F(G/F_{i}(G))$. As is well known, when  $G$ is soluble, the least number $h$ such that $F_{h}(G)=G$ is called the Fitting height $h(G)$ of $G$. Let now $r$ be the rank of $\gamma_{\infty}(C_G(H))$. Then $C_G(H)$ has $r$-bounded Fitting height (see for example Lemma 1.4 of \cite{KS2}) and hence $G$ has $(r,|H|)$-bounded Fitting height. 

We shall proceed by  induction on $h(G)$. Firstly, we consider the case where $h(G)=2$.  Indeed, let $P$ be a Sylow $p$-subgroup of $\gamma_{\infty}(G)$ and $Q$ an $FH$-invariant Hall $p'$-subgroup of $G$. Then, by the preceeding paragraphs and Lemma \ref{lema 3.1}, the rank of $P=[P,Q]$ is $(r,|H|)$-bounded and so the rank of $\gamma_{\infty} (G)$ is $(r,|H|)$-bounded. Assume next that $h(G)>2$ and let $N=F_2(G)$ be the second term of the Fitting series of $G$. It is clear that the Fitting height of $G/\gamma_{\infty} (N)$ is $h-1$ and $\gamma_{\infty} (N)\leq \gamma_{\infty}(G)$. Hence, by induction we have that $\gamma_{\infty}(G)/\gamma_{\infty} (N)$ has $(r,|H|)$-bounded rank. As a consequence, it holds that  $${\bf r}(\gamma_{\infty}(G))\leq {\bf r}( \gamma_{\infty}(G)/\gamma_{\infty} (N))+{\bf r}(\gamma_{\infty}(N))$$ completing the proof of Theorem A(b).

The proof of Theorem B can be directly obtained as in the above argument by replacing Proposition \ref{prop1} by Proposition \ref{prop2}; and  Proposition \ref{Fr} by Proposition \ref{dih}.


\begin{thebibliography}{00}






\bibitem{CP} C. Acciarri, P. Shumyatsky and A Thillaisundaram, \textit{Conciseness of coprime commutators in finite groups}, Bull. Aust. Math. \textbf{89} (2014), 252-258.

\bibitem{E1} E. de Melo, \textit{Fitting Height of a Finite Group with a Metabelian Group of Automorphisms}. Communication in Algebra \textbf{43} (2015),  4797-4808.

\bibitem{E} E. de Melo, \textit{Nilpotent residual and Fitting subgroup of fixed points in finite groups}. J. Group Theory \textbf{22} (2019), 1059-1068.

\bibitem{EAP} E. de Melo, A. S. Lima and P. Shumyatsky \textit{Nilpotent residual of fixed points}.  Arch. Math \textbf{111} (2018) 13-21.

\bibitem{EJ1} E. de Melo, J. Caldeira, \textit{On finite groups admitting automorphisms with nilpotent
centralizers}. J. Algebra \textbf{493} (2018) 185-193.

\bibitem{EJ2} E. de Melo, J. Caldeira, \textit{Supersolvable Frobenius groups with nilpotent centralizers}. J. Pure and Applied Algebra \textbf{223} (2019) 1210-1216.

\bibitem{GO} D. Gorenstein, \textit{Finite Groups}, Harper and Row, London, New York, 1991.

\bibitem{MS} N. Y. Makarenko and P. Shumyatsky, \textit{Frobenius groups as groups of automorphisms}, Proc.
Am. Math. Soc. \textbf{138} No. 10 (2010) 3425-3436 . 

\bibitem{MKS} N. Yu. Makarenko, E. I. Khukhro, and P. Shumyatsky, \textit{Fixed points of Frobenius groups of
automorphisms}, Dokl. Akad. Nauk, 437, No. 1 (2011) 20-23.



\bibitem{K1} E. I. Khukhro, \textit{The nilpotent length of a finite group admitting a Frobenius group of automorphisms with a fixedpoint-free kernel. } Algebra Logika, \textbf{49} (2010), 819-833; English transl, Algebra Logic,  \textbf{49} (2011) 551-560.

\bibitem{K2} E. I. Khukhro, \textit{Fitting height of a finite group with a Frobenius group of automorphisms,} J. Algebra \textbf{366} (2012), 1-11.

\bibitem{K3} E. I. Khukhro, \textit{Rank and order of a finite group admitting a Frobenius group of automorphisms. }Algebra Logika \textbf{52} (2013) 99-108; English transl., Algebra Logic \textbf{52} (2013) 72-78.

\bibitem{KM1} E.I. Khukhro EI, N.Yu Makarenko,  \textit{Finite groups and Lie rings with a metacyclic Frobenius group of automorphisms.} J. Algebra  \textbf{386} (2013)  77-104.

\bibitem{ENP} E. I. Khukhro, N. Yu Makarenko and P. Shumyatsky, \textit{Frobenius groups of automorphisms and their fixed points},  Forum Math. \textbf{26} (2014), 73-112.

\bibitem{KS2} E. I. Khukhro, P. Shumyatsky, \ {\em Finite groups with Engel sinks of bounded rank}, Glasgow Mathematical Journal \textbf{60} (2018), 695-701.


\bibitem{EG1} İ.Ş. Güloğlu,  G. Ercan,  \textit{Action of a
Frobenius-like group}, J Algebra \textbf{ 402}, (2014) 533--543.

\bibitem{EGO} G. Ercan, İ.Ş. Güloğlu, E. Öğüt, \textit{Nilpotent Length of a Finite Solvable Group with a Frobenius Group of Automorphisms,} Com. Algebra \textbf{42} issue 11, (2014) 4751-4756.

\bibitem{EG2} G. Ercan, İ.Ş. Güloğlu, \textit{ Action of a
Frobenius-like group with fixed-point-free kernel}. J Group Theory \textbf{17}, (2014) 863-873.

\bibitem{EGK1} G. Ercan, İ.Ş. Güloğlu, E.I. Khukhro,  \textit{Rank and Order of a Finite Group admitting a Frobenius-like Group of
Automorphisms}, Algebra and Logic, \textbf{ 53} Issue 3, (2014) 258–265.

\bibitem{EGK2} G. Ercan, İ.Ş. Güloğlu, E.I.Khukhro,  \textit{ Derived length of a Frobenius-like kernel}, J Algebra \textbf{412}, (2014) 179-188.
\bibitem{EG5} G. Ercan, İ.Ş. Güloğlu, \textit{Action of a Frobenius-like group with kernel having central derived subgroup, }International Journal of Algebra and Computation  \textbf{26} No. 6  (2016) 1257–1265.

\bibitem{EG3} G. Ercan, İ.Ş. Güloğlu,    \textit{On the influence of fixed point free nilpotent automorphism groups,} Monat. Math. \textbf{184} (2017) 531–538. 

 \bibitem{EGK3} G. Ercan, İ.Ş. Güloğlu, E.I. Khukhro,     \textit{Frobenius-like groups as groups of automorphisms}. Turk J Math. \textbf{38} (2014) 965 – 976.
 
\bibitem{EG4} G. Ercan, İ.Ş. Güloğlu, (2017) \textit{Finite groups admitting a dihedral group of automorphisms.} Algebra and Discrete Mathematics \textbf{23}. Number 2, 223–229.


\bibitem{PS} P. Shumyatsky, \textit{The dihedral group as group of automorphisms}.  J. Algebra \textbf{375} (2013), 1-12.









\bibitem{KN} \textit{Unsolved problems in group theory.} The Kourovka Notebook. 18th edition, Institute of Mathematics, Novosibirsk 2014.



\end{thebibliography}
\end{document}